\documentclass[12pt,a4paper,oneside,intlimits,sumlimits]{amsart}

\ifdefined\SMART
\usepackage[paperwidth=12cm,paperheight=24cm,left=5mm,right=5mm,top=10mm,bottom=5mm]{geometry}
\else
\calclayout
\fi

\usepackage{amssymb,latexsym,amsfonts,textcomp,fancyhdr,calc,graphicx}
\usepackage{pdfpages,amsfonts,amsthm,amssymb,latexsym,graphicx,leftidx,fancyhdr}
\usepackage{amsmath,amstext,amsthm,amssymb,amsxtra}
\usepackage[integrals]{wasysym}

\usepackage[allcolors=blue,allbordercolors=blue,pdfborderstyle={/S/U/W 1}]{hyperref}
\usepackage[ocgcolorlinks]{ocgx2}
\usepackage{dsfont}
\usepackage[framemethod=tikz]{mdframed}
\usepackage{asymptote}
\usepackage{enumerate}
\usepackage{changepage}

\usepackage[normalem]{ulem}
\usepackage{cite}

\usepackage{txfonts,pxfonts,tikz} 
\usepackage{tgschola}
\usepackage[T1]{fontenc}

\usepackage{mathtools}

\usepackage{bbm}

\theoremstyle{plain}
\newtheorem{theorem}{Theorem}

\newtheorem{corollary}{Corollary}
\newtheorem{proposition}{Proposition}

\theoremstyle{definition}

\newtheorem{case[theorem]}{Case}

\newcommand{\beql}[1]{\begin{equation}\label{#1}}
\newcommand{\eeq}{\end{equation}}

\newcommand{\Abs}[1]{{\left|{#1}\right|}}

\newcommand{\Set}[1]{{\left\{{#1}\right\}}}

\newcommand{\one}{{\bf 1}}

\newcommand{\RR}{{\mathbb R}}

\newcommand{\ZZ}{{\mathbb Z}}

\newcommand{\HHH}{{\mathcal H}}

\newcounter{rem}
\newcounter{step}
\newcounter{mysec}
\newcounter{mysubsec}[mysec]
\begin{document}
\sloppy

\title{Functions with comparable integrals on all $k$-planes}

\author{Mihail N. Kolountzakis}
\address{\href{http://math.uoc.gr/en/index.html}{Department of Mathematics and Applied Mathematics}, University of Crete,\\Voutes Campus, 70013 Heraklion, Greece,\newline and \newline \href{https://ics.forth.gr/}{Institute of Computer Science}, Foundation of Research and Technology Hellas, N. Plastira 100, Vassilika Vouton, 700 13, Heraklion, Greece}
\email{kolount@uoc.gr}

\author[G. E. Pfander]{G\"otz E. Pfander}
\address{Chair of Mathematics -- Scientific Computing, Catholic University of Eichst\"att--Ingolstadt, \href{https://www.ku.de/en/mids}{MIDS}, Auf der Schanz 49, 85049 Ingolstadt, Germany}
\email{pfander@ku.de}

\subjclass[2020]{44A12, 53C65, 52C22}

\keywords{Radon transform, k-plane transform, affine Grassmannian, integral geometry, Steinhaus tiling problem, measurable tilings, lattice tilings.}

\begin{abstract}
Let $d \ge 2$ and $1 \le k \le d-1$.
We show that if $f:\RR^d\to\RR$ is nonnegative and measurable and $0<m\le M <\infty$ then it is impossible that on almost all affine $k$-planes $P$ in $\RR^d$ the integral of $f$ on $P$ lies between $m$ and $M$. Let $G = \ZZ^k \times \Set{0}^{d-k}$. Using $m=M=1$ and $f$ being the indicator function of a measurable set in $\RR^d$ this then implies that there is no measurable Steinhaus set for the group $G$ in $\RR^d$. In other words there is no measurable set $S \subseteq \RR^d$ such that $S$ tiles $\RR^d$ with $T(G)$, for all $T \in O(d)$. We also generalize our impossiblity results concerning the size of line-integrals of functions to integrals on strips in the plane.
\end{abstract}

\date{\today}

\maketitle

\tableofcontents

\section{Introduction}\label{s:intro}

A classical question  of Steinhaus \cite{sierpinski1958probleme,moser1981research} asks if there is a set $S \subseteq \RR^2$ such that for all $\theta\in[0, 2\pi)$ $S$ tiles $\RR^2$ when translated at the locations $R_\theta\ZZ^2$, where $R_\theta$ denotes rotation by $\theta$ around the origin. This means that every $x \in \RR^2$ should belong to exactly one translate $S+n$, for $n\in R_\theta\ZZ^2$. In a major result Jackson and Mauldin \cite{jackson2002sets,jackson2002lattice,mauldin2001some,jackson2003survey} proved the existence of lattice Steinhaus sets in the plane (not necessarily measurable).

We are interested in the version of this problem where, on the one hand, we demand that $S$ is Lebesgue measurable and, on the other, we view the tiling $S+R_\theta\ZZ^2$ as an almost everywhere property: for all $\theta$, almost every $x \in \RR^2$ belongs to exactly one translate $S+n$, where $n \in R_\theta\ZZ^2$. It is easy to see that if $S$ is a measurable Steinhaus set then its measure must be 1. Measurable Steinhaus sets are not known to exist when tiling by $\ZZ^2$ and its rotates.

Regarding the measurable version of the problem Sierpinski \cite{sierpinski1958probleme} showed that a bounded set which is either closed or open cannot have the
Steinhaus property while Croft \cite{croft1982three} and Beck \cite{beck1989lattice} showed that this is also impossible for any bounded and measurable set (see also \cite{mallinikova1995}). Kolountzakis \cite{kolountzakis1996problem,kolountzakis1996new} and Kolountzakis and Wolff \cite{kolountzakis1999steinhaus} proved that any measurable set in the plane that has the measurable Steinhaus property must necessarily have very slow decay at infinity. In \cite{kolountzakis1999steinhaus} it was also shown that there can be no measurable Steinhaus sets in dimension $d\ge 3$ (here we are seeking a measurable subset of $\RR^d$ which tiles with every rotate $T(\ZZ^d)$, where $T \in O(d)$). This was reproved later by Kolountzakis and Papadimitrakis \cite{kolountzakis2002steinhaus} by a very different method. See also \cite{chan2007steinhaus,mauldin2003comments,ciucu1996remark,srivastava2005steinhaus}. Kolountzakis \cite{kolountzakis1997multi} looks at the case where the linear transformation $\rho$ is only required to take on finitely many values and the question is studied there for when finitely many lattices of the same volume in $\RR^d$ can have the same fundamental domain. Especially for the case of two lattices there have been some major results recently \cite{grepstad2026bounded,spyridakis2026bounded}.

The Steinhaus tiling problem makes sense for more general subsets of $\RR^2$ or $\RR^d$. If $\Gamma \subseteq \RR^d$ we say that the measurable $S \subseteq \RR^d$ has the Steinhaus property with respect to $\Gamma$ if $S+T(\Gamma)$ is a tiling for all $T \in O(d)$. It was shown in \cite{kolountzakis2017measurable} that for any finite $\Gamma\subseteq\RR^d$ there is no measurable set in $\RR^d$ with the Steinhaus property. 

In \cite{kolountzakis2017measurable} it was also shown that if $\Gamma = \ZZ\times\Set{0} \subseteq \RR^2$ then there is also no measurable set in $\RR^2$ with the Steinhaus property. It is this last result that concerns us in this paper. It was shown in \cite{kolountzakis2017measurable} that if such a set $S$ existed (in the plane) and $f=\one_S$ then $\int_L f = 1$ on almost all lines $L$ in the plane. This was then proved to be impossible, thus disproving the existence of sets $S$ with the Steinhaus property with respect to $\ZZ\times\Set{0}$ in the plane.

The essential part of the proof of this fact was to prove that it is impossible for a set $S$ in the plane with indicator function $f$ to satisfy
\begin{equation}\label{all-lines}
m \le \int_L f \le M,\ \ \ \text{for almost all lines $L$},
\end{equation}
for some constants $0 < m \le M < \infty$. (For the application to Steinhaus sets it was enough to have $m = M = 1$.) This was proved using the theory of harmonic functions. The proof in \cite{kolountzakis2017measurable} introduces, for $S\subset\RR^2$, the function in $\RR^3$
\begin{equation}\label{potential}
F(z)=\int_{\RR^2}\frac{\one_S(w)}{|z-w|}\,dw, \qquad z\in\RR^3,
\end{equation}
shows that its boundary values on $\RR^2$ are constant, proves that $F$ is continuous and harmonic in the upper half-space, and then uses the Poisson representation for bounded harmonic functions to obtain a contradiction with the decay of $F$ in the vertical direction.

The main contribution of this paper is to give an alternative, elementary proof of the impossibility of \eqref{all-lines}, while at the same time generalizing from indicator functions to all nonnegative functions (a modification of the proof in \cite{kolountzakis2017measurable} can also be made to work for nonnegative functions) and generalizing also from line integrals to integrals over affine $k$-planes in $\RR^d$.

There is some related literature in integral geometry and tomography. For instance Ilmavirta and Paternain \cite{ilmavirta2019functions} prove that a smooth bounded strictly convex Euclidean domain admits an $L^1$ density integrating to one over almost every line meeting the domain if and only if the domain is a ball. Armitage \cite{armitage1994non} and Zalcman \cite{zalcman1982uniqueness} construct signed functions on the plane whose integrals along all lines are 0. What distinguishes our problem is the absence of integrability or support conditions on the function.

Thus our main result, whose proof is presented in Section \ref{s:proof}, is:

\begin{theorem}\label{th:main}
Let $f \ge 0$ be a nonnegative Lebesgue measurable function on $\RR^d$, $d\ge 2$, and let $1 \le k \le d-1$. Then there are no constants $0 < m \le M < \infty$ for which
\begin{equation}\label{k-planes}
m \le \int_P f \, d\HHH^k \le M,\ \ \ \text{for almost all $k$-planes $P$}.
\end{equation}
\end{theorem}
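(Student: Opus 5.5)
The plan is to integrate the hypothesis over a family of $k$-planes that grows with a parameter $R$, rewrite the result by Fubini as a weighted integral of $f$, and let $R\to\infty$. The lower bound $m$ keeps the normalized integral bounded below by a positive constant. Dominated convergence, with a majorant that the upper bound $M$ makes integrable, forces it to $0$.

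\textbf{Setup.} Parametrize affine $k$-planes as $P=V+t$, with $V\in G(d,k)$ carrying the rotation-invariant probability measure $\gamma$ and $t\in V^\perp$ carrying $(d-k)$-dimensional Lebesgue measure. This is the standard invariant measure on the affine Grassmannian, so ``almost all $k$-planes'' refers to it. For $R>0$ define
\[
I_R=\int_{G(d,k)}\int_{V^\perp\cap\{|t|\le R\}}\int_{V+t} f\,d\HHH^k\,dt\,d\gamma(V).
\]
By \eqref{k-planes}, $m\,c_{d,k}R^{d-k}\le I_R\le M\,c_{d,k}R^{d-k}$, where $c_{d,k}$ is the volume of the unit ball in $\RR^{d-k}$.

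\textbf{Fubini rewriting.} For fixed $V$, the two inner integrals together equal $\int_{\RR^d} f(y)\,\one[|\pi_{V^\perp}y|\le R]\,dy$, where $\pi_{V^\perp}$ is the orthogonal projection onto $V^\perp$. Hence
\[
I_R=\int_{\RR^d} f(y)\,\varphi_R(y)\,dy,\qquad \varphi_R(y)=\gamma\{V:\ |\pi_{V^\perp}y|\le R\}.
\]
All integrands are nonnegative, so Tonelli applies with no integrability assumptions. By homogeneity, $\varphi_R(y)=\varphi_1(y/R)$.

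\textbf{Key estimate (the main obstacle).} I need the two-sided bound
\[
\varphi_1(z)\asymp \min\bigl(1,|z|^{-(d-k)}\bigr).
\]
Equivalently: for a fixed unit vector $u$ and a uniformly random $(d-k)$-dimensional subspace $W$, the probability that $|\pi_W u|\le s$ is comparable to $s^{d-k}$ for small $s$. Rotating, this is the same as projecting a uniform random unit vector in $\RR^d$ onto a fixed $\RR^{d-k}$. The density of that projection is explicit, proportional to $(1-|x|^2)^{(k-2)/2}$ on the unit ball of $\RR^{d-k}$. This density is bounded above and below near the origin when $k\ge2$, and is integrable with the same small-ball behaviour when $k=1$. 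In either case the probability is $\asymp s^{d-k}$ for $s\le 1/2$. For $|z|\le 2$ one has $\varphi_1(z)\ge\varphi_1(2z/|z|)>0$, which handles the ``$1$'' branch. This step is the only genuine computation, and I would do it first.

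\textbf{Conclusion.} Combining the key estimate with homogeneity, for $R\ge1$,
\[
R^{k-d}\varphi_R(y)\le C\min\bigl(R^{k-d},|y|^{k-d}\bigr)\le C\min\bigl(1,|y|^{k-d}\bigr)\le C'\varphi_1(y).
\]
The pointwise limit of the middle expression is $0$ as $R\to\infty$. The majorant $C'f\varphi_1$ is integrable, since $\int f\varphi_1=I_1\le M c_{d,k}<\infty$. This is exactly where the upper bound $M$ is used. Dominated convergence then gives $R^{k-d}I_R\to0$. This contradicts $R^{k-d}I_R\ge m\,c_{d,k}>0$, which comes from the lower bound. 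Hence no $0<m\le M<\infty$ as in \eqref{k-planes} can exist.
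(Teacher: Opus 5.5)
Your proof is correct, and it takes a genuinely different route from the paper's.

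The paper first reduces to a radial $f$. It translates so that almost every $k$-plane through the origin satisfies the bounds, then averages over $O(d)$. From planes through the origin it reads off $\int_0^\infty f(r)r^{k-1}\,dr<\infty$. It then writes the integral over a plane at distance $t$ as the explicit Abel-type transform $F(t)=\sigma_{k-1}\int_t^\infty f(r)\,r\,(r^2-t^2)^{(k-2)/2}\,dr$. For $k\ge2$ this gives $F(t)\to0$ pointwise. For $k=1$ the kernel is singular, so the paper averages $F$ over $[R,2R]$ and bounds $\sqrt r\int_R^{\min\{2R,r\}}(r-t)^{-1/2}\,dt\le CR$ by hand.

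You skip the symmetrization entirely:
\begin{itemize}
\item Integrating the hypothesis over all planes at distance at most $R$ from the origin and applying Tonelli turns everything into $\int f\varphi_R$.
\item The only geometric input is the small-ball estimate $\varphi_1(z)\asymp\min(1,|z|^{k-d})$, taken from the marginal density of a uniform point of $S^{d-1}$. Incidentally, that density is bounded above and below on $|x|\le 1/2$ for every $k\ge1$; the $k=1$ singularity sits at $|x|=1$ and plays no role.
\item Your bound $I_1\le Mc_{d,k}$ plays the role of the paper's $\int_0^\infty f(r)r^{k-1}\,dr<\infty$.
\item Your $R^{k-d}$-normalized average over a ball of displacements plays the role of the paper's $[R,2R]$ average. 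It works uniformly in $k$ and is closed off by a single dominated convergence.
\end{itemize}

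What each route buys: the paper's gives explicit one-dimensional formulas and, for $k\ge2$, pointwise decay of $F$. Yours avoids the translation and rotation-averaging reductions, each of which needs its own null-set/Fubini argument on the affine Grassmannian that the paper only sketches. It also needs no case split. In fact it proves an averaged strengthening: if the average of $\int_P f$ over planes at distance at most $1$ from the origin is finite, then the average over planes at distance at most $R$ tends to $0$. So the upper bound is needed only on average near the origin, and the lower bound only on average at large scales.
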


In the following proposition we show that if $S$ has the Steinhaus property with repsect to $\ZZ^k \times \Set{0}^{d-k}$ in $\RR^d$ then its indicator function would have to satisfy \eqref{k-planes}.

\begin{proposition}\label{steinhaus-to-planes}
Let $1\le k < d$ and $G=\ZZ^k\times\{0\}^{d-k}\subseteq\RR^d$, let
$S\subset\RR^d$ be measurable, and write $f=\one_S$.
Assume that for almost every $T\in O(d)$,
$\sum_{n\in T(G)}f(x-n)=1$ for almost every $x\in\RR^d$.
Then, for almost every $x\in\RR^d$, one has
$\HHH^k(S\cap P)=1$ for almost every affine $k$-plane $P$
through $x$.
\end{proposition}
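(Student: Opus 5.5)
The proof works one rotation at a time: for a fixed $T$, the tiling condition restricted to each translate of the $k$-plane $V_T=T(\RR^k\times\Set{0}^{d-k})$ becomes a lattice tiling inside that plane. Fubini then lets us interchange the quantifiers ``for a.e. $T$'' and ``for a.e. $x$''. Throughout, replace $S$ by a Borel set differing from it by a null set. This changes neither the hypothesis nor the conclusion: for each fixed $T$, the map $x\mapsto x-Tn$ is measure preserving, and a null set meets almost every translate of a plane in an $\HHH^k$-null set by Fubini. With $S$ Borel, the function $\Phi(T,x)=\sum_{n\in G} f(x-Tn)$ is Borel on $O(d)\times\RR^d$, because each summand is $f$ composed with a continuous map.

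\emph{Step 1 (fixed $T$).} Fix a $T$ for which $\Phi(T,x)=1$ for a.e. $x$. Write $\RR^d=V_T\oplus V_T^\perp$ and $x=u+v$ with $u\in V_T$, $v\in V_T^\perp$. By Fubini, for a.e. $v\in V_T^\perp$ the identity $\sum_{n\in\ZZ^k}f(u+v-Tn)=1$ holds for $\HHH^k$-a.e. $u\in V_T$. Now $\Lambda=T(\ZZ^k\times\Set{0}^{d-k})$ is a lattice in $V_T$ with fundamental domain $D=T([0,1)^k\times\Set{0}^{d-k})$ of $k$-volume $1$. For such a $v$, unfolding the sum over the lattice gives
\begin{equation*}
\HHH^k\bigl(S\cap(v+V_T)\bigr)=\int_{V_T}f(u+v)\,du=\int_D\sum_{n\in\ZZ^k}f(u+v-Tn)\,du=|D|=1 .
\end{equation*}
Since this depends only on $v$, we get $\HHH^k(S\cap(x+V_T))=1$ for a.e. $x\in\RR^d$.

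\emph{Step 2 (swap quantifiers).} The function $(T,x)\mapsto\HHH^k(S\cap(x+V_T))=\int_{\RR^k}f(x+T(s,0))\,ds$ is measurable on $O(d)\times\RR^d$ by Tonelli. By Step 1 it equals $1$ for a.e. $x$, for a.e. $T$. Fubini on $O(d)\times\RR^d$ with Haar $\times$ Lebesgue measure then gives that for a.e. $x$ it equals $1$ for Haar-a.e. $T\in O(d)$.

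\emph{Step 3 (translate to planes).} The affine $k$-planes through a fixed $x$ are exactly the sets $x+V$ with $V\in G(d,k)$. The natural measure class on them is the $O(d)$-invariant measure on the Grassmannian, which is the push-forward of Haar measure under $T\mapsto V_T$. So the set of $V$ with $\HHH^k(S\cap(x+V))\neq1$ has null preimage in $O(d)$, hence is null in $G(d,k)$, which is the claim.

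The main obstacle is the measure-theoretic bookkeeping: choosing a Borel representative so that $\Phi$ is jointly measurable and Fubini can be applied in both Step 1 and Step 2, and checking that ``almost every plane through $x$'' is indeed with respect to the push-forward of Haar measure. The geometric content, the unfolding over a fundamental domain in Step 1, is routine.
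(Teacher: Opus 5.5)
Your proposal is correct and is essentially the paper's proof. You fix a good rotation $T$, apply Fubini along the translates of $T(\RR^k\times\Set{0}^{d-k})$, and unfold the lattice sum over a fundamental domain to get $\HHH^k(S\cap(x+T(\RR^k\times\Set{0}^{d-k})))=1$ for a.e.\ $x$. You then swap the quantifiers by Fubini on $O(d)\times\RR^d$. Your extra bookkeeping (Borel representative, joint measurability via Tonelli, and identifying the measure on planes through $x$ with the push-forward of Haar measure) makes explicit what the paper leaves implicit.
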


\begin{proof}
Put $E_0=\RR^k\times\Set{0}^{d-k}$. Fix a rotation $T$ for which
the hypothesis holds, and set $E=T(E_0)$ and $\Lambda=T(G)$.
Writing $\RR^d=E^\perp\oplus E$ and applying Fubini, for almost
every $y\in E^\perp$ we have
$\sum_{n\in\Lambda}f(y+u-n)=1$ for almost every $u\in E$.
Integrating this for $u$ in a fundamental domain $Q$ of $\Lambda$ in $E$ gives
$\HHH^k(S\cap(y+E)) = 1$. Hence, for almost every $T$, we have
$\HHH^k(S\cap(x+T(E_0)))=1$ for almost every $x\in\RR^d$. Using Fubini again we obtain that for a.e.\ $x$, this equality holds for a.e.\ $T\in O(d)$, which is the desired conclusion.

\end{proof}

Proposition \ref{steinhaus-to-planes} and Theorem \ref{th:main} immediately imply
\begin{corollary}\label{cor:no-steinhaus-sets}
If $d \ge 2$ and $1\le k \le d-1$ then there are no measurable sets with the Steinhaus property with respect to $\ZZ^k \times \Set{0}^{d-k}$.
\end{corollary}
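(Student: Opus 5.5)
Corollary \ref{cor:no-steinhaus-sets} follows by combining Proposition \ref{steinhaus-to-planes} with Theorem \ref{th:main}, taking $m=M=1$. The plan is to argue by contradiction.

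Suppose $S\subseteq\RR^d$ is measurable and has the Steinhaus property with respect to $G=\ZZ^k\times\Set{0}^{d-k}$. By definition, for every $T\in O(d)$ the family $S+T(G)$ tiles $\RR^d$ almost everywhere, i.e.
\[
\sum_{n\in T(G)}\one_S(x-n)=1 \quad \text{for a.e. } x\in\RR^d .
\]
In particular this holds for almost every $T$, so the hypothesis of Proposition \ref{steinhaus-to-planes} is met. The proposition then gives: for almost every $x\in\RR^d$, $\HHH^k(S\cap P)=1$ for almost every affine $k$-plane $P$ through $x$.

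The one step needing care is passing from ``a.e.\ point, a.e.\ plane through it'' to ``a.e.\ affine $k$-plane'', since that is the form in which Theorem \ref{th:main} is stated. I would parametrize affine $k$-planes as $x+T(E_0)$, with $T\in O(d)$ and $x$ ranging over the orthogonal complement $T(E_0)^\perp$. The natural measure on the affine Grassmannian is the pushforward of Haar measure on $O(d)$ times Lebesgue measure on the complement.

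Let $B$ be the set of pairs $(x,T)\in\RR^d\times O(d)$ for which $\HHH^k(S\cap(x+T(E_0)))\neq 1$. This set is measurable, by the measurability of $k$-plane integrals of nonnegative measurable functions (Fubini). By the proposition, $B$ is null in $\RR^d\times O(d)$. Fubini then shows that for a.e.\ $T$, the section $\Set{x: (x,T)\in B}$ is null in $\RR^d$.

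This section is invariant under translations along $T(E_0)$, since it depends only on the plane $x+T(E_0)$. Hence it is the preimage of a subset of $T(E_0)^\perp$, and that subset must be null there. Therefore the set of bad planes is null in the affine Grassmannian. (Equivalently, the proof of the proposition already produced exactly this statement before its last Fubini step.)

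So $f=\one_S\ge 0$ satisfies \eqref{k-planes} with $m=M=1$, contradicting Theorem \ref{th:main}. Hence no measurable Steinhaus set exists. No real obstacle is expected: the only point is the measure-theoretic identification of null sets of planes just described.
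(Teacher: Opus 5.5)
Your proposal is correct and matches the paper, which obtains the corollary directly by applying Theorem \ref{th:main} with $f=\one_S$ and $m=M=1$ to the conclusion of Proposition \ref{steinhaus-to-planes}. Your extra Fubini argument, which converts ``a.e.\ point, a.e.\ plane through it'' into ``a.e.\ affine $k$-plane'', is sound and makes explicit a step the paper leaves implicit. That statement already appears in the proof of the proposition before its last Fubini step, and the proof of Theorem \ref{th:main} immediately reduces back to planes through a generic point anyway.
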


If the nonnegative, measurable function $f:\RR^2\to\RR$ has line integrals between $m$ and $M$ along almost all straight lines of the plane then for almost all directions $u$ and almost every strip $S$ in the plane, parallel to $u$, of width $R$ the integral $\int_S f$ lies in $[mR, MR]$. We know there are no functions with line integrals in $[m, M]$ but are there any functions whose integrals over any strip $S$ of width $R$ satisfy $mR \le \int_S f \le M R$? The following corollary shows this is not the case.

\begin{corollary}\label{cor:strips-all-widths}
For any $0 < m \le M <\infty$ there is no nonnegative, measurable function $f$ on $\RR^2$ such that if $S$ is any strip and $R$ is its width then $mR \le \int_S f \le M R$.
\end{corollary}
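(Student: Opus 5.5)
Assume, for contradiction, that $f\ge 0$ is measurable on $\RR^2$ and $mR\le\int_S f\le MR$ for every strip $S$ of width $R$. The plan is to show that $f$ then has line integrals in $[m,M]$ along almost every line, which contradicts Theorem~\ref{th:main} with $d=2$, $k=1$.

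Fix a unit direction $u$ with unit normal $u^\perp$. For $t\in\RR$ let $g_u(t)=\int_{\RR} f(t u^\perp+s u)\,ds\in[0,\infty]$ be the integral of $f$ along the line $\{x\cdot u^\perp=t\}$. By Tonelli, the strip $\{a\le x\cdot u^\perp\le a+R\}$ has integral $\int_a^{a+R} g_u(t)\,dt$. The hypothesis therefore gives
\[
mR\le \int_a^{a+R} g_u(t)\,dt\le MR\qquad\text{for all } a\in\RR,\ R>0.
\]
In particular $g_u$ is locally integrable, hence finite almost everywhere.

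Next apply the Lebesgue differentiation theorem. Divide by $R$ and let $R\to0^+$ with $a=t$. At every Lebesgue point $t$ of $g_u$, which means for almost every $t$, this yields $m\le g_u(t)\le M$. Thus for every direction $u$, almost every line parallel to $u$ has $f$-integral in $[m,M]$.

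Finally, the exceptional set $\{(u,t): g_u(t)\notin[m,M]\}$ is measurable in the $(u,t)$ parametrization of lines, since $g_u(t)$ is jointly measurable by Tonelli. Each $u$-slice of this set is null, so by Fubini it has measure zero. Hence \eqref{k-planes} holds for almost all lines with $d=2$, $k=1$, contradicting Theorem~\ref{th:main}.

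There is no real obstacle here. The only points needing care are the joint measurability of $(u,t)\mapsto g_u(t)$ and the fact that the hypothesis, applied to strips of arbitrarily small width, forces local integrability so that differentiation can be used. We do not even need the hypothesis for all widths simultaneously beyond a sequence $R\to0$.
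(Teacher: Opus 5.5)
Your proposal is correct and follows the paper's own argument. Tonelli turns the strip bounds into averaged bounds on the line-integral function $t\mapsto\int_{L_t}f$, and Lebesgue differentiation gives $m\le\int_{L}f\le M$ for almost every line in each direction, contradicting Theorem~\ref{th:main}; your explicit joint-measurability and Fubini step over directions only spells out what the paper states in one sentence.
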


\begin{proof}
Fix a unit vector $u$ and, for $t \in \RR$, let $L_t=\Set{x: x \cdot u = t}$. For almost every $t$ put $F_u(t) = \int_{L_t} f$. By Fubini, for every $a \in \RR$ and $R>0$,
\begin{equation}
m \le \frac1R\int_a^{a+R} F_u(t) \,dt \le M. \label{averaged-line-bounds}
\end{equation}
In particular $F_u \in L^1_{\mathrm{loc}}(\RR)$. By the Lebesgue differentiation theorem, letting $R \to 0$ in \eqref{averaged-line-bounds} gives
\begin{equation}
m \le F_u(t) \le M \label{line-bounds}
\end{equation}
for a.e.\ $t$. Since this holds for every direction $u$, it follows that \eqref{line-bounds} holds for almost every line in the plane, contradicting Theorem \ref{th:main}.

\end{proof}

Key in the proof of Corollary \ref{cor:strips-all-widths} was the fact that we could use strips of arbitrarily small width. It turns out that this is not necessary.

\begin{corollary}\label{cor:strips-of-width-one}
For any $0 < m \le M <\infty$ there is no nonnegative, measurable function $f$ on $\RR^2$ such that if $S$ is any strip of width $1$ then $m \le \int_S f \le M$.
\end{corollary}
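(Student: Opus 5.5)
The plan is to reduce to Theorem \ref{th:main} with $d=2$, $k=1$ by smoothing $f$ with a radial kernel whose one-dimensional projection in every direction is exactly $\one_{[-1/2,1/2]}$. Then line integrals of the smoothed function become integrals of $f$ over strips of width $1$. Concretely, set
\[
\psi(x)=\frac{1}{\pi}\Bigl(\tfrac14-|x|^2\Bigr)^{-1/2}\one_{\{|x|<1/2\}}(x),\qquad x\in\RR^2,
\]
which is the planar projection of normalized surface measure on the sphere of radius $1/2$ in $\RR^3$ (Archimedes). It is nonnegative, radial and integrable.

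\textbf{Step 1: the projection of $\psi$.} For any unit vector $u$ and $s\in\RR$, integrating $\psi$ along the line $\Set{x: x\cdot u=s}$ gives
\[
\frac1\pi\int_{-a}^{a}\frac{dv}{\sqrt{a^2-v^2}}=1,
\qquad a=\sqrt{1/4-s^2},
\]
when $|s|<1/2$, and $0$ otherwise. So the line integrals of $\psi$ in every direction $u$ equal $\one_{[-1/2,1/2]}(s)$. By radial symmetry this holds for all $u$ simultaneously.

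\textbf{Step 2: the smoothed function.} Define $g=f*\psi\ge 0$, which is measurable, possibly infinite somewhere a priori. Fix a line $L=\Set{x: x\cdot u=t}$. Since everything is nonnegative, Tonelli's theorem gives
\[
\int_L g\,d\HHH^1=\int_{\RR^2}f(y)\Bigl(\int_L\psi(x-y)\,d\HHH^1(x)\Bigr)dy .
\]
By Step 1 the inner integral equals $\one_{[-1/2,1/2]}(t-y\cdot u)$. Hence $\int_L g$ is exactly $\int_S f$, where $S$ is the strip of width $1$ centred on $L$.

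\textbf{Step 3: conclusion.} By hypothesis $m\le\int_L g\le M$ for every line $L$. This contradicts Theorem \ref{th:main} applied to $g$ with $d=2$, $k=1$.

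There is essentially no obstacle once the kernel is found. The only point needing care is that the projection of $\psi$ must be the same indicator in all directions, which forces the kernel to be radial. This is why the Abel-inversion (hemisphere-density) kernel above is the natural choice. The Tonelli interchange is legitimate because $f$ and $\psi$ are nonnegative.
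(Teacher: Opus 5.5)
Your proof is correct, and it takes a genuinely different route from the paper's.

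\textbf{The paper's route.} It smooths $f$ with a Gaussian $\phi$. It then proves only two-sided bounds $m_0\le q_u\le M_0$ on the projected density of $g=f*\phi$. Those bounds control the integrals of $g$ over strips of \emph{every} width $R$ by $m_0R$ and $M_0R$. The paper then needs Corollary \ref{cor:strips-all-widths}, where Lebesgue differentiation recovers line integrals, before Theorem \ref{th:main} can be applied.

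\textbf{Your route.} You choose the Archimedes kernel $\psi$, whose line integrals equal exactly $\one_{[-1/2,1/2]}$ in every direction. This turns width-$1$ strip integrals of $f$ into line integrals of $g=f*\psi$ as an identity. The constants $m,M$ are unchanged and the bounds hold for every line. You therefore reach Theorem \ref{th:main} in one step, skipping both the intermediate corollary and the estimates for $m_0,M_0$ that the paper does not write out.

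\textbf{Trade-off.} The paper's route is more flexible. Essentially any rapidly decaying radial kernel whose marginal is positive near $0$ would do, no exact identity is needed, and the resulting $g$ is smooth and finite everywhere.

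\textbf{A loose end.} Your kernel is unbounded, so a priori $g$ may equal $+\infty$ at some points. You noted this but did not close it. It takes one line: since $\int_L g\le M$ for every line $L$, Fubini gives $g<\infty$ almost everywhere. Redefining $g$ on a null set of $\RR^2$ changes its line integrals only on a null set of lines, so Theorem \ref{th:main} applies. Alternatively, the proof of Theorem \ref{th:main} runs verbatim for $[0,\infty]$-valued functions.
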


\begin{proof}

Let $\phi(x)=\frac1\pi e^{-|x|^2}$, $x \in \RR^2$, and $g = f*\phi$. The convolution is well defined because of the decay of the gaussian and the assumption on the integrals of $f$ on strips of width 1.

Fix a unit vector $u$ and define the measure $\mu_u$ on $\RR$ that arises by projecting of $f$ onto $\RR u$: 
$$
\mu_u(I) = \int_{\Set{x: x\cdot u\in I}} f(x) \, dx.\ \ (I \subseteq \RR)
$$
For every interval $I$ of length $1$ we have then $m \le \mu_u(I) \le M$.

The one-dimensional marginal of $\phi$ is $h(t)=\frac1{\sqrt{\pi}}e^{-t^2}$. Consequently the projection of $g(x)$ onto $\RR u$ has density
\begin{equation}
q_u(t)=\int_{\mathbb R}h(t-s)\,d\mu_u(s).
\label{smoothed-projection}
\end{equation}
From \eqref{smoothed-projection} we easily obtain constants $m_0, M_0 > 0$, depending only on $m$ and $M$, such that
\begin{equation}\label{q-bounds}
m_0\le q_u(t)\le M_0 \ \ \ \text{ for every } u, t.
\end{equation}

If $S$ is now any strip of arbitrary width $R>0$, corresponding under $x\mapsto x\cdot u$ to an interval $J$ of length $R$, then $\int_S g=\int_J q_u(t)\,dt$ and \eqref{q-bounds} gives
$$
m_0 R \le \int_S g \le M_0 R.
$$
This contradicts Corollary \ref{cor:strips-all-widths}.

\end{proof}

\section{Proof of Theorem \ref{th:main}}\label{s:proof}

Let $f \ge 0$ satisfy \eqref{k-planes}. We may assume, translating $f$, that almost all $k$-planes $P$ through the origin satisfy $m \le \int_P f\,d\HHH^k \le M$.Then all rotates $f(T\cdot)$, with $T \in O(d)$, satisfy \eqref{k-planes}, and so does their average $\int_{O(d)} f(T\cdot)\,dT$, so we may assume that $f(x) = f(r)$, $r=\Abs{x}$, is radial. Integrating $f$ along any non-exceptional $k$-plane through the origin shows that 
\begin{equation}\label{f-integrable}
\int_0^\infty f(r) r^{k-1}\,dr < \infty.
\end{equation}

Write $F(t)$ for the integral of $f$ along any $k$-plane at distance $t$ from the origin (since $f$ is radial this does not depend on the chosen plane, only on $t$). We obtain
\begin{equation}\label{F}
F(t) = \sigma_{k-1} \int_t^\infty f(r) r (r^2-t^2)^{\frac{k-2}{2}}\,dr
\end{equation}
and for a.e.\ $t$ we have $m \le F(t) \le M$. (Here $\sigma_\ell$ denotes the surface area of a the unit sphere in $\RR^{\ell+1}$.)

For $k\ge 2$ we have from \eqref{F}
$$
F(t) \le \sigma_{k-1} \int_t^\infty f(r) r^{k-1}\,dr,
$$
hence $\lim_{t\to\infty}F(t) = 0$ from \eqref{f-integrable}, which contradicts the assumed lower bound $m$ for $F(t)$.

For $k=1$ equation \eqref{F} becomes
\begin{equation}\label{F1}
F(t) = 2 \int_t^\infty f(r) r (r^2-t^2)^{-\frac{1}{2}}\,dr.
\end{equation}
We will show that
$$
\lim_{R\to\infty} \frac1R \int_R^{2R} F(t)\,dt = 0,
$$
thus arriving again at contradiction with $F(t) \ge m$.

We have (Fubini)
\begin{align}
\frac1R \int_R^{2R} F(t)\,dt &= \frac{2}{R}\int_R^\infty f(r) r \left( \int_R^{\min\Set{2R,r}} \frac{dt}{\sqrt{r^2-t^2}} \right) \,dr \nonumber\\
 &\le \frac{2}{R}\int_R^\infty f(r) \sqrt{r} \left( \int_R^{\min\Set{2R,r}} \frac{dt}{\sqrt{r-t}} \right) \,dr. \label{to-bound}
\end{align}
We show that for $r \ge R$
\begin{equation}\label{OR}
S(r, R) := \sqrt{r} \int_R^{\min\Set{2R,r}} \frac{dt}{\sqrt{r-t}} = 2\sqrt{r}(\sqrt{r-R}-\sqrt{r-\min\Set{2R,r}}) \le C R
\end{equation}
for some constant $C$.

If $r \ge 2R$ then
\begin{align*}
S(r, R) &= 2\sqrt{r}(\sqrt{r-R}-\sqrt{r-2R}) \\
 &= \frac{2R\sqrt{r}}{\sqrt{r-R}+\sqrt{r-2R}} \\
 &\le \frac{2R\sqrt{r}}{\sqrt{r/2}} \\
 &\le C R.
\end{align*}

If $r \le 2R$ then
$$
S(r,R) = 2\sqrt{r}\sqrt{r-R} \le 2\sqrt{2R} \sqrt{2R-R} \le C R,
$$
which completes the proof of \eqref{OR}.

If follows that
$$
\frac1R\int_R^{2R} F(t)\,dt \le C \int_R^\infty f(r) \,dr
$$
which tends to $0$ as $R\to\infty$ from \eqref{f-integrable}, a contradiction with $F(t) \ge m$.

This completes the proof of Theorem \ref{th:main}.

\noindent{\bf Acknowledgement.} The authors used ChatGPT for mathematical exploration and literature search. Some key arguments were suggested by ChatGPT. The authors have written and checked every statement in this paper.

\bibliographystyle{alpha}
\bibliography{mk-bibliography.bib}

\begin{thebibliography}{Kol96b}

\bibitem[Arm94]{armitage1994non}
D.~H. Armitage.
\newblock A non-constant continuous function on the plane whose integral on
  every line is zero.
\newblock {\em The American Mathematical Monthly}, 101(9):892--894, 1994.

\bibitem[Bec89]{beck1989lattice}
J.~Beck.
\newblock {On a lattice point problem of H. Steinhaus}.
\newblock {\em Studia Sci. Math. Hung}, 24:263--268, 1989.

\bibitem[Ciu96]{ciucu1996remark}
M.~Ciucu.
\newblock {A remark on sets having the Steinhaus property}.
\newblock {\em Combinatorica}, 16(3):321--324, 1996.

\bibitem[CM07]{chan2007steinhaus}
W.~K. Chan and R.~Mauldin.
\newblock {Steinhaus tiling problem and integral quadratic forms}.
\newblock {\em Proc. Am. Math. Soc.}, 135(2):337--342, 2007.

\bibitem[Cro82]{croft1982three}
H.~T. Croft.
\newblock {Three lattice-point problems of Steinhaus}.
\newblock {\em Q. J. Math.}, 33(1):71--83, 1982.

\bibitem[GK26]{grepstad2026bounded}
S.~Grepstad and M.~N. Kolountzakis.
\newblock Bounded common fundamental domains for two lattices.
\newblock {\em Advances in Mathematics}, 487:110776, 2026.

\bibitem[IP19]{ilmavirta2019functions}
J.~Ilmavirta and G.~P. Paternain.
\newblock Functions of constant geodesic x-ray transform.
\newblock {\em Inverse Problems}, 35(6):065002, 2019.

\bibitem[JM02a]{jackson2002lattice}
S.~Jackson and R.~Mauldin.
\newblock {On a lattice problem of H. Steinhaus}.
\newblock {\em J. Am. Math. Soc.}, 15(4):817--856, 2002.

\bibitem[JM02b]{jackson2002sets}
S.~Jackson and R.~D. Mauldin.
\newblock {Sets meeting isometric copies of the lattice $\ZZ^2$ in exactly one
  point}.
\newblock {\em Proc. Natl. Acad. Sci. USA}, 99(25):15883--15887, 2002.

\bibitem[JM03]{jackson2003survey}
S.~Jackson and R.~D. Mauldin.
\newblock {Survey of the Steinhaus tiling problem}.
\newblock {\em Bull. Symb. Log.}, 9(03):335--361, 2003.

\bibitem[Kol96a]{kolountzakis1996new}
M.~N. Kolountzakis.
\newblock {A new estimate for a problem of Steinhaus}.
\newblock {\em Int. Math. Res. Not.}, 1996(11):547--555, 1996.

\bibitem[Kol96b]{kolountzakis1996problem}
M.~N. Kolountzakis.
\newblock {A problem of Steinhaus: Can all placements of a planar set contain
  exactly one lattice point?}
\newblock {\em Progress in Mathematics}, 139:559--566, 1996.

\bibitem[Kol97]{kolountzakis1997multi}
M.~N. Kolountzakis.
\newblock Multi-lattice tiles.
\newblock {\em Int. Math. Res. Not.}, 1997(19):937--952, 1997.

\bibitem[KP02]{kolountzakis2002steinhaus}
M.~N. Kolountzakis and M.~Papadimitrakis.
\newblock {The Steinhaus tiling problem and the range of certain quadratic
  forms}.
\newblock {\em Ill. J. Math.}, 46(3):947--951, 2002.

\bibitem[KP17]{kolountzakis2017measurable}
M.~N. Kolountzakis and M.~Papadimitrakis.
\newblock {Measurable Steinhaus sets do not exist for finite sets or the
  integers in the plane}.
\newblock {\em Bull. Lond. Math. Soc.}, 49(5):798--805, 2017.

\bibitem[KW99]{kolountzakis1999steinhaus}
M.~N. Kolountzakis and T.~H. Wolff.
\newblock {On the Steinhaus tiling problem}.
\newblock {\em Mathematika}, 46(02):253--280, 1999.

\bibitem[Mau]{mauldin2001some}
R.~Mauldin.
\newblock {Some problems in set theory, analysis and geometry, Paul Erd{\"o}s
  and his mathematics, I (Budapest, 1999), 493--506}.
\newblock {\em Bolyai Soc. Math. Stud}, 11.

\bibitem[Mos81]{moser1981research}
W.~Moser.
\newblock {\em {Research problems in discrete geometry}}.
\newblock Department of Mathematics, McGill University, 1981.

\bibitem[MR95]{mallinikova1995}
E.~Mallinikova and S.~Rukshin.
\newblock {On one Steinhaus problem}.
\newblock {\em Vestn. St. Petersbg. Univ., Math.}, 28(1):28--32, 1995.

\bibitem[MY03]{mauldin2003comments}
R.~Mauldin and A.~Yingst.
\newblock {Comments about the Steinhaus tiling problem}.
\newblock {\em Proc. Am. Math. Soc.}, 131(7):2071--2079, 2003.

\bibitem[Sie58]{sierpinski1958probleme}
W.~Sierpi{\'n}ski.
\newblock {Sur un probleme de H. Steinhaus concernant les ensembles de points
  sur le plan}.
\newblock {\em Fundam. Math.}, 2(46):191--194, 1958.

\bibitem[Spy26]{spyridakis2026bounded}
E.~Spyridakis.
\newblock Bounded and measurable common fundamental domains for two lattices.
\newblock {\em arXiv preprint arXiv:2602.23480}, 2026.

\bibitem[ST05]{srivastava2005steinhaus}
S.~M. Srivastava and R.~Thangadurai.
\newblock {On Steinhaus sets}.
\newblock {\em Expo. Math.}, 23(2):171--177, 2005.

\bibitem[Zal82]{zalcman1982uniqueness}
L.~Zalcman.
\newblock {Uniqueness and nonuniqueness for the Radon transform}.
\newblock {\em Bulletin of the London Mathematical Society}, 14(3):241--245,
  1982.

\end{thebibliography}

\end{document}